\chardef\bslash=`\\ 
\def\verbatim{\interlinepenalty\@M \@verbatim
  \leftskip\@totalleftmargin\advance\leftskip2pc
  \frenchspacing\@vobeyspaces \@xverbatim}
\newtheorem{thm}{Theorem}[section]
\newtheorem{cor}[thm]{Corollary}
\newtheorem{pro}[thm]{Proposition}
\newtheorem{rem}[thm]{Remark}
\newtheorem{que}[thm]{Question}
\begin{document}


\title
{Function Spaces over Products with Ordinals}
\author{Raushan  Buzyakova}
\email{Raushan\_Buzyakova@yahoo.com}

\keywords{ function spaces in the topology of point-wise convergence,  space of ordinals, product spaces, homeomorphism}
\subjclass{ 54C35, 54B10, 54E99}


\begin{abstract}{
We are concerned with the question of when a homeomorphism between $C_p(X\times \tau)$ and $C_p(Y\times \tau)$ implies the existence of a homeomorphism between $C_p(X\times \tau')$ and $C_p(Y\times \tau')$, where $\tau$ and $\tau'$ are spaces of ordinals with some additional desired properties.
}
\end{abstract}

\maketitle
\markboth{R. Buzyakova}{Function Spaces over Products with Ordinals}
{ }

\section{Introduction}\label{S:introduction}
\par\bigskip\noindent
We will be concerned with the following general problem.

\par\bigskip\noindent
{\bf Problem.} {\it
Let $C_p(X\times Z)$ be homeomorphic to $C_p(Y\times Z)$. Suppose that $Z'$ is a subspace or a superspace of $Z$. What additional conditions on $X,Y,Z,Z'$ guarantee that $C_p(X\times Z')$ is homeomorphic to $C_p(Y\times Z')$? That $X$ is homeomorphic to $Y$?
}

\par\bigskip\noindent
Note that for any $X$ and $Y$ we can find $Z$ such that $X\times Z$ is homeomorphic to $Y\times Z$ (in particular, $Z=X^\omega\times Y^\omega$ works). Therefore,  it is natural to require that  candidates for $Z$ in the above problems have rigid structures. There are many interesting results around this problem. Since most of $Z$'s  in this study will be ordered, we would like to mention O. Okunev's result (see, for example, \cite[the argument of Theorem 0.6.2]{Arh} ) that if $C_p(X\times \mathbb R)$ and $C_p(Y\times \mathbb R)$ are homeomorphic, then so are $C_p(X)^\omega$ and $C_p(Y)^\omega$.
In this work, we will primarily target  cases when $Z$ is a space of ordinals. Our major restriction on the topology of a space will be a network size. We will show, in particular, that under certain restrictions on a network size of $X$ and $Y$, a homeomorphism between $C_p(X\times \tau)$ and $C_p(Y\times \tau )$, where $\tau$ is regular and uncountable, implies the existence of a homeomorphism between $C_p(X\times \lambda)$ and $C_p(Y\times \lambda)$ for some $\lambda<\tau$.  This statement and other observations lead to some natural generalizations as well as  questions for possible further exploration.
\par\smallskip\noindent 
Recall that  a network of $X$  is a family of sets with the same properties as a basis except that elements of the family need not be open.  By $A(\tau)$ we denote the Alexandroff single-point compactification of a discrete space of size $\tau$ with $\infty$ being its only non-isolated point. All spaces under consideration are Tychonov. In notation and terminology of general topological nature we follow \cite{Eng}. To distinguish ordered pairs from open intervals, we will denote the former by $\langle a,b\rangle$ and the latter by $(a,b)$. We also refer the reader to \cite{Arh} for general facts and notations involving function spaces endowed with the topology of point-wise convergence. 

\section{Study}\label{S:study}

\par\bigskip\noindent
In our arguments we will use  the following folklore fact for which we outline a proof for convenience.

\par\bigskip\noindent
\begin{pro}\label{pro:folklore}
(Folklore) 
 Let an ordinal $\tau$ have cofinality greater than the density of an infinite space $X$ and let $f\in C_p(X\times \tau)\cup C_p(X\times (\tau +1))$.  Then there exists $\lambda <\tau$ such that $f|_{\{x\}\times [\lambda, \tau)}$ is constant for every $x\in X$.
\end{pro}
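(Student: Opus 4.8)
The plan is to reduce everything to a single statement about vertical slices and then promote a pointwise conclusion to a uniform one using the density hypothesis. Since the asserted conclusion concerns only the behavior of $f$ on $X\times[\lambda,\tau)\subseteq X\times\tau$, both cases $f\in C_p(X\times\tau)$ and $f\in C_p(X\times(\tau+1))$ are handled identically: in the second case I simply restrict $f$ to the subspace $X\times\tau$, which keeps it continuous. So I will assume throughout that $f$ is continuous on $X\times\tau$.

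First I would fix a point $x\in X$ and examine the slice function $g_x\colon\tau\to\mathbb R$, $g_x(\alpha)=f(x,\alpha)$, which is continuous. Because $X$ is infinite, $d(X)\ge\omega$, so the hypothesis forces the cofinality of $\tau$ to be uncountable. I would then invoke the classical fact that a continuous real-valued function on an ordinal of uncountable cofinality is eventually constant: if $g_x$ oscillated by at least $1/n$ arbitrarily far out, one could build an increasing $\omega$-sequence $\alpha_0<\beta_0<\alpha_1<\beta_1<\cdots$ witnessing the oscillation, whose supremum $\gamma$ lies below $\tau$ precisely because the cofinality of $\tau$ is uncountable; continuity of $g_x$ at $\gamma$ would then be violated, since $g_x(\alpha_i)$ and $g_x(\beta_i)$ would have to converge to the common value $g_x(\gamma)$ while staying $1/n$ apart. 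Letting $n\to\infty$ yields some $\lambda_x<\tau$ with $g_x$ constant on $[\lambda_x,\tau)$.

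The crux is to make $\lambda_x$ independent of $x$; taking $\sup_{x\in X}\lambda_x$ is useless, since $|X|$ may well exceed the cofinality of $\tau$. Here I would use density. Fix a dense set $D\subseteq X$ with $|D|=d(X)$, which is strictly less than the cofinality of $\tau$, and put $\lambda=\sup_{d\in D}\lambda_d$. As $|D|$ is below the cofinality of $\tau$, this gives $\lambda<\tau$, and $f(d,\cdot)$ is constant on $[\lambda,\tau)$ for every $d\in D$. Finally I would upgrade from $D$ to all of $X$ by a continuity argument: fixing $x\in X$ and $\alpha,\beta\in[\lambda,\tau)$, consider $h\colon X\to\mathbb R$ given by $h(y)=f(y,\alpha)-f(y,\beta)$, which is continuous because $\alpha$ and $\beta$ are held fixed. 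Since $h(d)=0$ for every $d\in D$, the function $h$ vanishes on a dense subset and hence is identically $0$ on $X$; in particular $f(x,\alpha)=f(x,\beta)$. Thus $f|_{\{x\}\times[\lambda,\tau)}$ is constant for every $x\in X$, as required.

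I expect the main obstacle to be exactly this uniformity step: extracting a single threshold $\lambda$ valid for every point of a possibly large space $X$. The pointwise eventual constancy is a standard fact about ordinals, but its naive aggregation over all of $X$ fails outright. The whole force of the hypothesis that the cofinality of $\tau$ exceeds $d(X)$ (rather than $|X|$) is spent on the combination of two moves: taking the supremum over a dense set of size below the cofinality of $\tau$, and the observation that a continuous function on $X$ vanishing on a dense set vanishes everywhere.
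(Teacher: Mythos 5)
Your proof is correct and follows essentially the same route as the paper's: fix a dense set $D$ of size $d(X)$, get an eventual-constancy threshold for each slice over $D$, take the supremum (which stays below $\tau$ since $d(X)<cf(\tau)$), and extend to all of $X$ by continuity and density. The only difference is that you spell out the two steps the paper leaves implicit, namely the oscillation argument for eventual constancy on a slice (the paper just cites countable compactness of $\tau$) and the final density argument via the function $h(y)=f(y,\alpha)-f(y,\beta)$.
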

\begin{proof}
To see why the statement is true let $D=\{d_\alpha:\alpha <d(X)\}$ be dense in $X$, where $d(X)$ is the density of $X$. For each $\alpha<d(X)$, fix $\tau_\alpha<\tau$ such that $f$ is constant on $\{d_\alpha\}\times [\tau_\alpha, \tau)$. Such a $\tau_\alpha$ exists due to countable compactness of $\tau$. Since $d(X) < cf(\tau )$, we conclude that  $\lambda=\sup\{\tau_\alpha:\alpha<d(X)\}<\tau$. By continuity of $f$ and density of $D$, $\lambda$ is as desired.
\end{proof}

\par\bigskip\noindent
\begin{thm}\label{thm:timesomegaone}
Let $X$ and $Y$ have networks of cardinality less than $\tau$, where $\tau$ is regular and uncountable. Let $C_p(X\times \tau)$ be homeomorphic to $C_p(Y\times \tau)$. Then $C_p(X\times \lambda)$ is homeomorphic to $C_p(Y\times \lambda)$ for some non-zero $\lambda<\tau$.
\end{thm}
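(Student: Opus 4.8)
The plan is to exhibit $C_p(X\times\tau)$ as an increasing union, indexed by $\tau$, of closed subspaces each homeomorphic to $C_p(X\times(\lambda+1))$, do the same for $Y$, and then run a back-and-forth (closing-off) argument along the regular cardinal $\tau$ to locate a single ordinal at which the given homeomorphism matches the two filtrations.

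First I would introduce, for each $\lambda<\tau$, the set
\[
 L_\lambda = \{\, f\in C_p(X\times\tau) : f\langle x,\alpha\rangle = f\langle x,\lambda\rangle \text{ for all } x\in X,\ \alpha\ge\lambda \,\},
\]
and the analogous family $\{M_\mu\}_{\mu<\tau}$ inside $C_p(Y\times\tau)$. Each $L_\lambda$ is closed, being an intersection of zero-sets of differences of evaluation maps, and via the continuous retraction $r_\lambda:\tau\to\lambda+1$ with $r_\lambda(\alpha)=\min\{\alpha,\lambda\}$ the restriction $f\mapsto f|_{X\times(\lambda+1)}$ is a homeomorphism of $L_\lambda$ onto $C_p(X\times(\lambda+1))$, its inverse being composition with $\mathrm{id}_X\times r_\lambda$. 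Since $d(X)\le nw(X)<\tau=cf(\tau)$, Proposition \ref{pro:folklore} applies to every $f\in C_p(X\times\tau)$ and gives $C_p(X\times\tau)=\bigcup_{\lambda<\tau}L_\lambda$, an increasing union; likewise $C_p(Y\times\tau)=\bigcup_{\mu<\tau}M_\mu$.

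The quantitative heart of the argument is that $d(L_\lambda)=d(C_p(X\times(\lambda+1)))\le nw(X\times(\lambda+1))<\tau$, because $nw(X)<\tau$ and $|\lambda+1|<\tau$. Hence, writing $\Phi:C_p(X\times\tau)\to C_p(Y\times\tau)$ for the homeomorphism and $\Psi$ for its inverse, I would fix a dense subset of $L_\lambda$ of size $<\tau$, push it through $\Phi$, and—using regularity of $\tau$ together with the closedness of each $M_\mu$—conclude that $\Phi(L_\lambda)\subseteq M_{\phi(\lambda)}$ for some $\phi(\lambda)<\tau$; symmetrically $\Psi(M_\mu)\subseteq L_{\psi(\mu)}$ for some $\psi(\mu)<\tau$. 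I would then build an increasing sequence $\lambda_0=1<\lambda_1<\cdots$ with $\Phi(L_{\lambda_n})\subseteq M_{\lambda_{n+1}}$ and $\Psi(M_{\lambda_n})\subseteq L_{\lambda_{n+1}}$ for all $n$, and set $\lambda^*=\sup_n\lambda_n<\tau$ (regularity).

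Finally I would show $\Phi(L_{\lambda^*})=M_{\lambda^*}$. The delicate point—and the step I expect to be the main obstacle—is that $\bigcup_n L_{\lambda_n}$ is dense in $L_{\lambda^*}$: given a basic neighborhood of $f\in L_{\lambda^*}$ determined by finitely many pairs $\langle x_i,\alpha_i\rangle$, I would approximate $f$ by $f\circ(\mathrm{id}_X\times r_{\lambda_n})$ for large $n$, the only inexact coordinates being those with $\alpha_i=\lambda^*$, which are controlled by continuity of $\alpha\mapsto f\langle x_i,\alpha\rangle$ at the limit $\lambda^*$. Granting this density, $\Phi$ carries $\bigcup_n L_{\lambda_n}$ into $\bigcup_n M_{\lambda_{n+1}}\subseteq M_{\lambda^*}$, and closedness of $M_{\lambda^*}$ yields $\Phi(L_{\lambda^*})\subseteq M_{\lambda^*}$; the symmetric argument yields $\Psi(M_{\lambda^*})\subseteq L_{\lambda^*}$. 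These two inclusions force $\Phi(L_{\lambda^*})=M_{\lambda^*}$, whence $C_p(X\times(\lambda^*+1))\cong L_{\lambda^*}\cong M_{\lambda^*}\cong C_p(Y\times(\lambda^*+1))$. Since $\tau$ is a regular uncountable cardinal, $\lambda:=\lambda^*+1$ is a nonzero ordinal below $\tau$, which completes the proof.
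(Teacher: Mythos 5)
Your proposal is correct and follows essentially the same route as the paper's own proof: your sets $L_\lambda$ are exactly the paper's $F(X,\lambda)$, your three preliminary facts (closedness, the homeomorphism $L_\lambda\cong C_p(X\times(\lambda+1))$ via the retraction, and absorption of any image of density $<\tau$ into some $M_\mu$ using Proposition \ref{pro:folklore} and regularity) are the paper's Claims 1--3, and the back-and-forth sequence, the supremum $\lambda^*$, and the density of $\bigcup_n L_{\lambda_n}$ in $L_{\lambda^*}$ reproduce the paper's concluding argument. The only cosmetic difference is at the limit stage, where you approximate $f$ by $f\circ(\mathrm{id}_X\times r_{\lambda_n})$ using continuity at $\lambda^*$, whereas the paper's witness is chosen to agree with $f$ exactly at the finitely many constraint points; both are valid.
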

\begin{proof}
Fix a homeomorphism $H:C_p(X\times\tau)\to C_p(Y\times \tau)$. Next for $\alpha<\tau$, define $F(X,\alpha )$ as follows: $f\in F(X,\alpha )$ if and only if $f$ is constant on $\{x\}\times [\alpha,\tau)$ for every $x\in X$. Similarly, we define $F(Y,\alpha )$.

\par\medskip\noindent
{\it Claim 1. $F(X,\mu )$ is homeomorphic to $C_p(X\times (\mu + 1))$.}
\par\smallskip\noindent
To prove the claim, for each $f\in C_p(X\times (\mu + 1))$, let $f_\mu\in F(X,\mu)$ be the function such that $f(x,\alpha )=f_\mu(x,\alpha )$ for each $\langle x,\alpha\rangle\in X\times (\mu +1)$. Clearly, the map  $M: C_p(X\times (\mu + 1))\to 
 F(X,\mu)$ defined by $M(f)=f_\mu$ is a homeomorphism.

\par\bigskip\noindent
{\it Claim 2. $F(X,\alpha )$ is closed in $C_p(X\times \tau )$ for any $\alpha<\tau$}
\par\medskip\noindent
To prove the claim fix $f\in C_p(X\times \tau)\setminus F(X,\alpha)$. By the description of $F(X,\alpha)$, there exist  $x\in X$ and $\lambda\in (\alpha, \tau )$ such that $f(x,\lambda)\not = f(x,\alpha )$. Put $\epsilon=|f(x,\lambda)-f(x,\alpha )|$. Then the following open set contains $f$ and misses $F(X,\alpha)$:
$$
\{
g: g(x,\lambda )\in (f(x,\lambda) -\epsilon/3, f(x,\lambda) +\epsilon/3))\}\cap
$$
$$
\cap\{g: g(x,\alpha )\in (f(x,\alpha) -\epsilon/3, f(x,\alpha) +\epsilon/3))
\}
$$
The claim is proved.

\par\bigskip\noindent
{\it Claim 3. Let $\alpha<\tau$. Then $H(F(X,\alpha ))\subset F(Y,\alpha')$ for some $\alpha'<\tau$.}
\par\smallskip\noindent
To prove the claim, recall that  $X$ has netweight less than $\tau$ and $\tau$ is regular. Therefore, $X\times (\alpha +1)$ has netweight less than $\tau$. Therefore, $C_p(X\times (\alpha +1))$ has density less than $\tau$. By Claim 1, $F(X,\alpha )$ has density less than $\tau$ too. By Proposition \ref{pro:folklore},  any  subset of $C_p(Y\times \tau)$ of density less than $\tau$ is a subset of $F(Y,\alpha'))$ for some $\alpha'<\tau$.

\par\bigskip\noindent
By Claim 3, we can create a sequence $\alpha_1<\alpha_2<...$ with the following two properties:
\begin{description}
	\item[\rm P1] $H( F(X,\alpha_n))\subset F(Y,\alpha_{n + 1})$ if $n$ is odd.
	\item[\rm P2] $H^{-1}( F(Y,\alpha_n))\subset F(X,\alpha_{n + 1})$ if $n$ is even.
\end{description}
Put $\mu = \sup\{\alpha_n\}_n$. By Claim 1, it remains to show that  $H(F(X,\mu)) = F(Y,\mu )$. 
Since $\{\alpha_n\}_n$ is an increasing sequence we conclude that  that $H (\cup_n F(\alpha_n, X))= \cup_n F(\alpha_n,Y)$. Since $H $ is a homeomorphism, 
$H (\overline{\cup_n F(\alpha_n, X)})=\overline{\cup_n F(\alpha_n, Y)}$. It remains to show that $\overline{\cup_n F(\alpha_n, X)}=F(\mu,X)$
and $\overline{\cup_n F(\alpha_n, Y)}=F(\mu,Y)$. We will prove the former equality.
Since $F(\alpha_n,X)$ is a subset of $F(\mu,X)$ and the latter is closed by Claim 2, we conclude that $\overline{\cup_n F(\alpha_n, X)}\subset F(\mu,X)$. To show the reverse inclusion, fix $f\in F(\mu,X)$ and a standard open neighborhood $U$ of $f$. We may assume that $U=U_1\cap U_2$, where $U_1=U_1(p_1,...,p_n, B_1,...,B_n)$ and 
$U_2=U_2(q_1,...,q_k, O_1,..., O_k)$ for some $p_1,...,p_n\in X\times \mu$ and $q_1,...,q_k\in (X\times \tau ) \setminus (X\times \mu)$. To show that $U$ meets $\cup_n F(\alpha_n, X)$ let $N$ be such that $p_1,...,p_n\in X\times \alpha_N$. Define $g$ by letting $g(x,\beta) = f(x,\mu)$ for every $\beta>\alpha_N$ and $g(x,\beta)=f(x,\beta)$ for every $\beta\leq \alpha_N$. Clearly, $g\in U\cap F(\alpha_n,X)$.
\end{proof}

\par\bigskip\noindent
The following is a by-product of  the argument of Theorem \ref{thm:timesomegaone}.

\par\bigskip\noindent
\begin{thm}\label{thm:timesomegaoneplusone}
Let $X$ and $Y$ have networks of cardinality less than $\tau$, where $\tau$ is regular and uncountable. Let $C_p(X\times (\tau + 1))$ be homeomorphic to $C_p(Y\times (\tau + 1))$. Then $C_p(X\times \lambda)$ is homeomorphic to $C_p(Y\times \lambda)$ for some non-zero $\lambda<\tau$.
\end{thm}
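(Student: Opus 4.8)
The plan is to run the proof of Theorem~\ref{thm:timesomegaone} essentially verbatim, with $\tau$ replaced throughout by the compact ordinal $\tau+1$. Fix a homeomorphism $H\colon C_p(X\times(\tau+1))\to C_p(Y\times(\tau+1))$, and for $\alpha<\tau$ let $F(X,\alpha)$ be the set of all $f\in C_p(X\times(\tau+1))$ that are constant on $\{x\}\times[\alpha,\tau]$ for every $x\in X$, with $F(Y,\alpha)$ defined symmetrically. The one conceptual remark that makes the transfer legitimate is that for a \emph{continuous} $f$ there is no difference between being constant on $\{x\}\times[\alpha,\tau)$ and on $\{x\}\times[\alpha,\tau]$: since $\tau$ is regular and uncountable it is a limit point of $[\alpha,\tau)$ in $\tau+1$, so continuity forces $f(x,\tau)$ to equal the terminal constant value. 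Thus the adjoined top point carries no extra information, and the bookkeeping of Theorem~\ref{thm:timesomegaone} is unaffected.

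Next I would re-establish the three claims in this setting. Claim~1, that $F(X,\mu)$ is homeomorphic to $C_p(X\times(\mu+1))$, goes through with the same map $M(f)=f_\mu$, where now $f_\mu$ extends $f$ by the value $f(x,\mu)$ on all of $\{x\}\times[\mu,\tau]$; continuity of $f_\mu$ follows from the pasting lemma applied to the closed cover $X\times[0,\mu]$, $X\times[\mu,\tau]$ of $X\times(\tau+1)$. Claim~2, that each $F(X,\alpha)$ is closed, is verbatim: a witness $f(x,\lambda)\neq f(x,\alpha)$ with $\lambda\in(\alpha,\tau]$ yields the same separating basic open set. Claim~3, that $H(F(X,\alpha))\subset F(Y,\alpha')$ for some $\alpha'<\tau$, again reduces to showing that every subset of $C_p(Y\times(\tau+1))$ of density less than $\tau$ is contained in some $F(Y,\alpha')$; this is precisely where the $(\tau+1)$-clause of Proposition~\ref{pro:folklore} is used, and since that proposition is stated for $C_p(Y\times(\tau+1))$ as well, the argument is identical (take a dense subset of size $<\tau$, apply the proposition pointwise, and take the supremum of the resulting thresholds, which stays below $\tau$ by regularity).

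With the claims in hand, I would build the increasing sequence $\alpha_1<\alpha_2<\cdots$ satisfying P1 and P2 exactly as in Theorem~\ref{thm:timesomegaone}, and set $\mu=\sup_n\alpha_n<\tau$. Monotonicity gives $H(\cup_n F(X,\alpha_n))=\cup_n F(Y,\alpha_n)$, and since $H$ is a homeomorphism, $H\bigl(\overline{\cup_n F(X,\alpha_n)}\bigr)=\overline{\cup_n F(Y,\alpha_n)}$. The final step is the closure identity $\overline{\cup_n F(X,\alpha_n)}=F(X,\mu)$ and its $Y$-analogue. The inclusion $\subset$ is immediate from Claim~2; for $\supset$, given $f\in F(X,\mu)$ and a standard neighborhood $U$ determined by finitely many test points, I split those points into ones with second coordinate below $\mu$ and ones with second coordinate in $[\mu,\tau]$, choose $N$ so that all of the former lie in $X\times\alpha_N$, and take $g$ equal to $f$ on $X\times[0,\alpha_N]$ and equal to $f(x,\mu)$ for second coordinate above $\alpha_N$. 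Because $f$ is already constant on $\{x\}\times[\mu,\tau]$, this $g$ agrees with $f$ exactly at every test point and lies in $F(X,\alpha_{N+1})$, so $U$ meets $\cup_n F(X,\alpha_n)$. Combining this with Claim~1 gives $C_p(X\times(\mu+1))\cong F(X,\mu)\cong F(Y,\mu)\cong C_p(Y\times(\mu+1))$, so the theorem holds with $\lambda=\mu+1$, which is nonzero and, since $\tau$ is a limit ordinal, strictly below $\tau$.

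I expect no serious obstacle: the content lies in recognizing that Proposition~\ref{pro:folklore} was stated for the compactified ordinal precisely so that Claim~3 survives, and in the routine continuity checks for $f_\mu$ and $g$ near the new top point $\tau$. The only place that genuinely demands care is confirming that adjoining $\tau$ does not enlarge $F(X,\alpha)$ beyond the eventually-constant functions, which the cofinality remark of the first paragraph settles.
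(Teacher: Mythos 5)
Your proposal is correct and takes essentially the same approach as the paper: the paper gives no separate proof at all, stating only that the theorem is ``a by-product of the argument of Theorem~\ref{thm:timesomegaone},'' and your write-up is exactly that argument run with $\tau+1$ in place of $\tau$, using the $(\tau+1)$-clause of Proposition~\ref{pro:folklore} for Claim~3 and the observation that continuity makes the adjoined top point carry no extra information. If anything, you supply more detail (the pasting-lemma checks, the split of test points, and the conclusion $\lambda=\mu+1<\tau$) than the paper itself records.
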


\par\bigskip\noindent
One may wonder if a homeomorphism between $C_p(X\times \kappa)$ and $C_p(Y\times \kappa)$ for some ordinal $\kappa$ implies the existence of a homeomorphism between $C_p(X\times \lambda)$ and $C_p(Y\times \lambda)$ for some $0<\lambda<\kappa$. It is, however, not the case. Note that the function spaces over any  discrete space $X$  is homeomorphic  to $\mathbb R^{|X|}$. Therefore, $C_p(2\times \omega)$ and $C_p(3\times \omega)$ are homeomorphic but $C_p(2\times n)$ and $C_p(3\times n)$ are not homeomorphic for any positive integer $n$. 

\par\bigskip\noindent
For future reference let us isolate the case of $\tau=\omega_1$ of Theorem  \ref{thm:timesomegaone} into a corollary.

\par\bigskip\noindent
\begin{cor}\label{cor:timesomegaone}
Let $X$ and $Y$ have countable networks. If $C_p(X\times\omega_1 )$ is homeomorphic to $C_p(Y\times \omega_1 )$, then $C_p(X\times\lambda )$ is homeomorphic to $C_p(Y\times \lambda )$ for some non-zero countable  ordinal $\lambda$.
\end{cor}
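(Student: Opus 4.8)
The plan is to obtain this as a direct specialization of Theorem \ref{thm:timesomegaone} by taking $\tau=\omega_1$, so the only real work is to check that all the hypotheses of that theorem translate correctly. First I would verify that $\omega_1$ qualifies as the ambient ordinal there: it is uncountable by definition, and since its cofinality is $\omega_1$ itself, it is regular. Thus $\tau=\omega_1$ is a legitimate choice of a regular uncountable ordinal.

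Next I would match the cardinality restriction on the networks. A network of cardinality less than $\omega_1$ is precisely a countable network, because the cardinals strictly below $\omega_1$ are exactly the finite cardinals together with $\aleph_0$. Hence the hypothesis that $X$ and $Y$ have countable networks is literally the statement ``$X$ and $Y$ have networks of cardinality less than $\tau$'' in the case $\tau=\omega_1$. With this identification, the assumed homeomorphism between $C_p(X\times\omega_1)$ and $C_p(Y\times\omega_1)$ is exactly the hypothesis of Theorem \ref{thm:timesomegaone}.

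Applying that theorem then produces a non-zero $\lambda<\omega_1$ for which $C_p(X\times\lambda)$ is homeomorphic to $C_p(Y\times\lambda)$. Since the ordinals below $\omega_1$ are exactly the countable ordinals, such a $\lambda$ is a non-zero countable ordinal, which is precisely the asserted conclusion. I do not expect any genuine obstacle here: no new topological argument beyond the one already carried out in the proof of Theorem \ref{thm:timesomegaone} is required, and the only point demanding a word of care is the purely terminological passage between ``countable'' and ``of cardinality (respectively, an ordinal) less than $\omega_1$''.
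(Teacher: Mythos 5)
Your proposal is correct and matches the paper exactly: the paper itself introduces this corollary as nothing more than the case $\tau=\omega_1$ of Theorem~\ref{thm:timesomegaone}, so specializing that theorem and noting that regular uncountability of $\omega_1$, countability of networks, and countability of ordinals below $\omega_1$ all line up is precisely the intended argument. No further comment is needed.
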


\par\bigskip\noindent
The above corollary prompts the following question.
\par\bigskip\noindent
\begin{que}
Let $C_p(X\times (\omega +1))$ and $C_p(Y\times (\omega +1))$ be homeomorphic. Is it true that 
$C_p(X\times \mathbb Q)$ and $C_p(Y\times \mathbb Q)$ are homeomorphic? What additional conditions on $X$ and $Y$  guarantee an affirmative answer?
\end{que}

\par\bigskip\noindent
An argument similar to that  in Theorem \ref{thm:timesomegaone} leads to our next statement. Since the argument of Theorem  \ref{thm:timesomegaone} needs only  notation changes for our next result, we will provide the strategy without repeating the proof details of Theorem  \ref{thm:timesomegaone}.
 
\par\bigskip\noindent
\begin{thm}\label{thm:countablenetworktimesAtau}
Let $X$ and $Y$ have countable networks  and let $C_p(X\times A(\tau))$ be homeomorphic to $C_p(Y\times A(\tau))$ for some infinite cardinal $\tau$. Then $C_p(X\times A(\omega ))$ is homeomorphic to $C_p(Y\times A(\omega))$.
\end{thm}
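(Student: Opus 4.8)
The plan is to run the three-claim scheme of Theorem~\ref{thm:timesomegaone} almost verbatim, with the role of the ordinal tail $[\alpha,\tau)$ taken over by the complement of a countable set of isolated points of $A(\tau)$. I would identify the isolated points of $A(\tau)$ with $\tau$ and, for a subset $S\subseteq\tau$, define $F(X,S)$ to be the set of all $f\in C_p(X\times A(\tau))$ with $f(x,\xi)=f(x,\infty)$ for every $x\in X$ and every $\xi\in\tau\setminus S$; likewise $F(Y,S)$. The whole argument then parallels Theorem~\ref{thm:timesomegaone} with ``$F(X,\alpha)$'' replaced by ``$F(X,S)$ with $S$ countable'', and I would first record the three analogues of its claims.

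First, the analogue of Claim~1: if $S$ is countably infinite, then the subspace $S\cup\{\infty\}$ of $A(\tau)$ is homeomorphic to $A(\omega)$, and restriction to $X\times(S\cup\{\infty\})$, whose inverse extends a function by the value $f(x,\infty)$ on $\tau\setminus S$, is a homeomorphism $F(X,S)\to C_p(X\times A(\omega))$; the only point needing a line of checking is continuity of the extension at $\langle x,\infty\rangle$. Next, the analogue of Claim~2, that each $F(X,S)$ is closed in $C_p(X\times A(\tau))$, is literally the proof of Claim~2: a witnessing pair $x,\xi$ with $f(x,\xi)\neq f(x,\infty)$ produces a standard open set around $f$ that misses $F(X,S)$.

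The heart of the matter is the analogue of Claim~3, which also absorbs the role of Proposition~\ref{pro:folklore}: \emph{a subspace of $C_p(Y\times A(\tau))$ of countable density is contained in $F(Y,S')$ for some countable $S'\subseteq\tau$.} I would prove this directly. Fix a countable dense set $D\subseteq Y$ (a countable network makes $Y$ separable) and a countable dense subset $\{f_k\}_k$ of the given subspace. For each $k$ and each $d\in D$, continuity of $f_k(d,\cdot)$ on $A(\tau)$ at $\infty$ forces each $\{\xi:|f_k(d,\xi)-f_k(d,\infty)|\geq 1/n\}$ to be finite, so $\{\xi:f_k(d,\xi)\neq f_k(d,\infty)\}$ is countable; hence $S'=\bigcup_{k,d}\{\xi:f_k(d,\xi)\neq f_k(d,\infty)\}$ is countable. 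A routine pointwise-approximation argument then gives $f(d,\xi)=f(d,\infty)$ for every member $f$ of the subspace, every $d\in D$, and every $\xi\notin S'$, and continuity in the first coordinate together with density of $D$ upgrades this to all $y\in Y$, i.e. $f\in F(Y,S')$. Since $X\times A(\omega)$ has a countable network, $C_p(X\times A(\omega))$ has countable density, so by the analogue of Claim~1 each $F(X,S)$ has countable density and this statement applies to $H(F(X,S))$.

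With the three analogues in hand I would finish exactly as in Theorem~\ref{thm:timesomegaone}: build an increasing sequence of countable sets $S_1\subseteq S_2\subseteq\cdots$, with $S_1$ chosen infinite, satisfying $H(F(X,S_n))\subseteq F(Y,S_{n+1})$ for odd $n$ and $H^{-1}(F(Y,S_n))\subseteq F(X,S_{n+1})$ for even $n$; put $S_\infty=\bigcup_nS_n$, which is countably infinite; and verify $\overline{\bigcup_nF(X,S_n)}=F(X,S_\infty)$ and the same for $Y$, so that $H(F(X,S_\infty))=F(Y,S_\infty)$. By the analogue of Claim~1 this gives $C_p(X\times A(\omega))\cong F(X,S_\infty)\cong F(Y,S_\infty)\cong C_p(Y\times A(\omega))$. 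I expect the only genuinely new obstacle to be the analogue of Claim~3 above; everything else is the proof of Theorem~\ref{thm:timesomegaone} with ``$[\alpha,\tau)$'' read as ``$\tau\setminus S$'' and ``ordinal $<\tau$'' read as ``countable subset of $\tau$''.
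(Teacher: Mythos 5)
Your proposal is correct, and its overall architecture --- the sets $F(\cdot,S)$, their closedness, the identification of $F(X,S)$ with $C_p(X\times A(\omega))$ for countably infinite $S$ via restriction/extension along the continuous retraction of $A(\tau)$ onto $S\cup\{\infty\}$, the back-and-forth construction of the $S_n$, and the closure argument at the limit --- is exactly that of the paper's proof. The one genuine divergence is the engulfing step, which you correctly single out as the heart of the matter. The paper first proves a per-function statement (its Claim~1): every $f\in C_p(X\times A(\tau))$ lies in $F(A,X)$ for some countable $A$. This is proved by contradiction: one extracts an uncountable family $\{\langle x_\alpha,\lambda_\alpha\rangle:\alpha<\omega_1\}$ with $f(x_\alpha,\lambda_\alpha)<p<q<f(x_\alpha,\infty)$ for fixed rationals $p<q$, and uses Lindel\"ofness of $X$ (a consequence of the countable network) to get a complete accumulation point $\langle x,\infty\rangle$, which forces the contradictory inequalities $f(x,\infty)\geq q$ and $f(x,\infty)\leq p$; engulfing of a separable subspace then follows by combining this with separability and closedness of the $F$-sets. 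You instead prove the engulfing statement for separable subspaces directly: a countable dense $D\subseteq Y$, a countable dense family $\{f_k\}$, co-finiteness of neighborhoods of $\infty$ in $A(\tau)$ making each deviation set $\{\xi: f_k(d,\xi)\neq f_k(d,\infty)\}$ countable, then pointwise limits in $C_p$ and density of $D$ in $Y$ to upgrade to all members of the subspace and all $y\in Y$. Your lemma is equivalent in effect --- applied to a singleton it yields the paper's Claim~1, and conversely Claim~1 plus closedness yields your lemma --- but your proof is more elementary: it avoids complete accumulation points and Lindel\"ofness altogether, trading them for separability of $Y$, which is likewise a consequence of the countable network. Both routes then conclude identically, so your write-up is a valid, slightly streamlined alternative to the argument of Theorem~\ref{thm:countablenetworktimesAtau}.
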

\begin{proof}
 We may assume that $\tau$ is uncountable.
Fix a homeomorphism $\phi: C_p(X\times A(\tau))\to C_p(Y\times A(\tau))$. 
Given $A\subset \tau$, define $F(A,X)\subset C_p(X\times A(\tau ))$ as follows:
 
\par\bigskip\noindent
{\it Definition of $F(A,X)$: $f\in C_p(X\times A(\tau ))$ is in $F(A,X)$ if and only if $f(x,\beta)=f(x,\infty )$ for every $\beta\not \in A$ 
and every $x\in X$.}

\par\bigskip\noindent
{\it Claim 1. For every $f\in C_p(X\times A(\tau ))$ there exists a countable $A\subset \tau$ such that $f\in F(A,X)$.  }
\par\medskip\noindent
To prove the claim, assume the contrary. Then there exists a subset $\{\langle x_\alpha, \lambda_\alpha\rangle :\alpha<\omega_1\}\subset X\times A(\tau )$ such that $f(x_\alpha, \lambda_\alpha)\not = f(x_\alpha, \infty )$ for each $\alpha<\omega_1$, and $\lambda_\alpha\not = \lambda_\beta$ for distinct $\alpha,\beta<\omega_1$. Without loss of generality we may assume that there exist $p,q\in \mathbb Q$ such that $f(x_\alpha, \lambda_\alpha) <p <q<f(x_\alpha, \infty)$ for all $\alpha<\omega_1$. Since $X$ is Lindel\"of, there exists $\langle x,\infty\rangle $ a complete accumulation point of $\{\langle x_\alpha, \infty\rangle: \alpha<\omega_1\}$, and therefore, $f(x,\infty)\geq q$. By the product topology, $\langle x,\infty\rangle $ is also a complete accumulation point for $\{\langle x_\alpha, \lambda\rangle : \alpha<\omega_1\}$, and therefore, $f(x,\infty)\leq p$. We have arrived at a contradiction with $p<q$, which proves the claim.

\par\bigskip\noindent
The next two claims (as Claim 1) are proved using the same arguments as the corresponding claims of Theorem \ref{thm:timesomegaone}.
\par\bigskip\noindent
{\it Claim 2.  Let $K$ be an infinite subset of $\tau$ of cardinality $\kappa$. Then $F(X,K)$ is homeomorphic to $C_p(X\times A(\kappa ))$. }

\par\bigskip\noindent
{\it Claim 3. $F(X,A)$ is closed in $C_p(X\times A(\tau ))$ for any $A\subset \tau$.}

\par\bigskip\noindent
By Claim 2, it remains to find a countable $A\subset \tau$ such that $\phi(F(X,A))=F(Y,A)$. Inductively, we will define $A_n$ that will be building blocks for our desired $A$.
\begin{description}
	\item[\underline{\rm Step 0}]  $A_0=\omega$.
	\item[\underline{\rm Assumption}] $A_k$ is defined for all $k=0,..., n-1$ and $A_m\subset A_k$ if $m<k$.
	\item[\underline{\rm Step n}] Our definition of $A_n$ will depend on whether $n$ is odd or even.

\par\medskip\noindent
Assume that $n$ is odd. Since $X$ has a countable network, by Claim 2, $F(A_{n-1}, X)$ is separable. By Claims 1 and 3, there exists a countable $A_n\subset \tau$ that contains $A_{n-1}$ such that $\phi(F(A_{n-1}, X))\subset F(A_n, Y)$.

\par\medskip\noindent
Assume that $n$ is even. Similarly, we choose a countable $A_n\subset \tau$ such that $A_n$ contains $A_{n-1}$ and  $\phi^{-1}(F(A_{n-1}, Y))\subset F(A_n, X)$.
\end{description}

\par\bigskip\noindent
Our induction construction is complete. Put $A=\cup_n A_n$. By Claim 2, it remains to show that $\phi(F(A, X)) =  F(A, Y)$. The argument is identical to that in Theorem \ref{thm:timesomegaone}.
\end{proof}

\par\bigskip\noindent
A more general statement is as follows.
\begin{thm}\label{thm:timesAkappa}
Let $\tau$ be an infinite cardinal. Let $X$ and $Y$ have networks of cardinality $\tau<\kappa$. If $C_p(X\times A(\kappa ) )$ is homeomorphic to $C_p(Y\times A(\kappa ) )$, then $C_p(X\times A(\tau ) )$ is homeomorphic to $C_p(Y\times A(\tau ) )$. 
\end{thm}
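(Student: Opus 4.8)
The plan is to follow the proof of Theorem~\ref{thm:countablenetworktimesAtau} almost verbatim, replacing $\omega$ by $\tau$ and $\omega_1$ by $\tau^+$ throughout. Since $\tau<\kappa$ are cardinals we have $\kappa\geq\tau^+$, and in particular $\kappa$ is uncountable. Fix a homeomorphism $\phi:C_p(X\times A(\kappa))\to C_p(Y\times A(\kappa))$. For $A\subset\kappa$ define $F(A,X)\subset C_p(X\times A(\kappa))$ exactly as before: $f\in F(A,X)$ iff $f(x,\beta)=f(x,\infty)$ for every $x\in X$ and every $\beta\notin A$; define $F(A,Y)$ symmetrically. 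The goal is to produce a single $A\subset\kappa$ with $|A|=\tau$ for which $\phi(F(A,X))=F(A,Y)$; by the analogue of Claim~2 of Theorem~\ref{thm:countablenetworktimesAtau} this yields $C_p(X\times A(\tau))\cong F(A,X)\cong F(A,Y)\cong C_p(Y\times A(\tau))$.

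The one genuinely new point, and the step I expect to be the main obstacle, is the analogue of Claim~1 of Theorem~\ref{thm:countablenetworktimesAtau}: \emph{for every $f\in C_p(X\times A(\kappa))$ there is $A\subset\kappa$ with $|A|\le\tau$ such that $f\in F(A,X)$.} There the argument rested on the Lindel\"ofness of $X$; here I would replace that by the following consequence of $nw(X)\le\tau$: every indexed family $(x_\xi)_{\xi<\tau^+}$ in $X$ has a point $x$ such that each neighborhood of $x$ contains $x_\xi$ for $\tau^+$ many $\xi$. Indeed, if no such $x$ existed, each $x\in X$ would have an open $U_x$ with $|\{\xi:x_\xi\in U_x\}|\le\tau$; choosing a network element $N_x$ with $x\in N_x\subset U_x$, the at most $\tau$ distinct sets $N_x$ cover $\{x_\xi\}$, forcing $\tau^+\le\tau\cdot\tau=\tau$, a contradiction. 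Granting this, Claim~1 runs as before: if $f\notin F(A,X)$ for all $A$ of size $\le\tau$, then $\{\beta:\exists x\,f(x,\beta)\neq f(x,\infty)\}$ has size $\ge\tau^+$, so after thinning and a pigeonhole on a pair of rationals $p<q$ I obtain $(x_\xi,\lambda_\xi)_{\xi<\tau^+}$ with distinct $\lambda_\xi$ and $f(x_\xi,\lambda_\xi)<p<q<f(x_\xi,\infty)$. The accumulation fact gives $x$ with $f(x,\infty)\ge q$; since the $\lambda_\xi$ are distinct, every basic neighborhood $U\times(A(\kappa)\setminus K)$ of $\langle x,\infty\rangle$ with $K$ finite still contains $\tau^+$ many $\langle x_\xi,\lambda_\xi\rangle$, whence $f(x,\infty)\le p$, contradicting $p<q$.

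With Claim~1 in hand, the remaining ingredients are those of Theorem~\ref{thm:countablenetworktimesAtau} with only notational changes. The analogue of Claim~2, that $F(A,X)$ is homeomorphic to $C_p(X\times A(|A|))$ via the restriction map, and of Claim~3, that $F(A,X)$ is closed, go through verbatim. For the induction I would start with $A_0=\tau$ (an initial segment of $\kappa$ of size $\tau$, so that the final index set has size exactly $\tau$) and build an increasing chain $A_0\subset A_1\subset\cdots$ with each $|A_n|\le\tau$, alternately arranging $\phi(F(A_n,X))\subset F(A_{n+1},Y)$ and $\phi^{-1}(F(A_n,Y))\subset F(A_{n+1},X)$. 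Each step is possible because $F(A_n,X)$, being homeomorphic to $C_p(X\times A(|A_n|))$ with $nw(X\times A(|A_n|))\le\tau$, has density $\le\tau$; applying Claim~1 to the images of a dense set of size $\le\tau$ and taking the union of the resulting index sets yields a single $A_{n+1}$ of size $\le\tau$, and closedness together with continuity of $\phi$ upgrades containment of the dense set to containment of all of $F(A_n,X)$. Setting $A=\bigcup_n A_n$, so $|A|=\tau$, the same closure argument as in Theorem~\ref{thm:timesomegaone}---namely $\overline{\bigcup_n F(A_n,X)}=F(A,X)$ and likewise for $Y$---gives $\phi(F(A,X))=F(A,Y)$, completing the reduction. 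The crux throughout is the network-covering replacement for Lindel\"ofness in Claim~1; once that is established, the rest is bookkeeping.
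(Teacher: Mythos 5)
Your proof is correct and follows exactly the route the paper intends: the paper states this theorem without proof, as the direct generalization of Theorem~\ref{thm:countablenetworktimesAtau}, and your adaptation---replacing Lindel\"ofness by the fact that $nw(X)\le\tau$ forces every $\tau^+$-indexed family to have a point each of whose neighborhoods captures $\tau^+$ many members---is precisely the substitution needed to make that generalization go through. The remaining bookkeeping (the analogues of Claims 2 and 3, the alternating chain of index sets of size $\tau$ starting from $A_0=\tau$, and the closure argument for $A=\bigcup_n A_n$) matches the paper's argument with only notational changes, which is all the paper itself claims is required.
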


\par\bigskip\noindent
It would be interesting to consider scenarios in a direction somewhat opposite to Theorems \ref{thm:timesomegaone}  - \ref{thm:timesAkappa}, namely, in a direction aligned with the following question.
\par\bigskip\noindent
\begin{que}\label{que:timesomegaone}
Assume that $C_p(X\times \alpha )$ is homeomorphic to $C_p(Y\times \alpha)$ for every $\alpha<\omega_1$. Is it true that $C_p(X\times \omega_1)$ is homeomorphic to $C_p(Y\times\omega_1)$? What if $\omega_1$ is replaced by an arbitrary fixed ordinal?
\end{que} 

\par\bigskip\noindent
Note that there are many examples of pairs $X$ and $Y$ that satisfy the assumption of Question \ref{que:timesomegaone}. For example, it was shown
by van Mill \cite{vM0} that any two non-locally compact subspaces of rational numbers have homeomorphic $C_p$'s. This result was later generalized to any infinite non-discrete  subspaces of rationals  
by Dobrowolski, Marciszewsi, and Mogilski \cite {DMM}. In connection with these nice facts and the statement of Theorem \ref{cor:timesomegaone}, the following special case of  Question \ref{que:timesomegaone} might be of interest:

\begin{que}\label{que:countabletimesomegaone}
Let $X$ and $Y$ be infinite  non-discrete subspaces of rationals.  Is it true that $C_p(X\times \omega_1)$ is homeomorphic to $C_p(Y\times\omega_1)$? What if $\omega_1$ is replaced by $\omega_1 + 1$ or any other larger ordinal?
\end{que}

\par\bigskip\noindent
Next, we would like to remark on a positive outcome in the scenario "from smaller to larger" ordinal. A particular case of Okunev's main result in \cite{O} states that if $C_p(X)$ and $C_p(Y)$ are homeomorphic for preudocompact $X$ and $Y$, then $C_p(\beta X)$ and $C_p(\beta Y)$ are homeomorphic too. It is also a classical fact that if $X$ is compact, then $X\times \omega_1$ is pseudocompact  and $\beta (X\times \omega_1) = X\times (\omega_1 + 1)$. These facts imply that if 
 $X$ and $Y$ are compact and $C_p(X\times \omega_1 )$ is homeomorphic to $C_p(Y\times \omega_1)$, then $C_p(X\times (\omega_1+1))$ is homeomorphic to $C_p(Y\times (\omega_1+1))$. Clearly, the statement holds if we replace $\omega_1$ with any ordinal of uncountable cofinality. Let us summarize this discussion as follows.

\par\bigskip\noindent
\begin{rem}\label{thm:omegaplusone} (a Corollary to Okunev's Theorem in \cite{O}) 
Let $X$ and $Y$ be compact spaces and let an ordinal $\tau$ have uncountable cofinality. If  $C_p(X\times \tau )$ is homeomorphic to $C_p(Y\times \tau)$, then $C_p(X\times (\tau+1))$ is homeomorphic to $C_p(Y\times (\tau+1))$.
\end{rem}

\par\bigskip\noindent
A few natural questions arise from the above discussion.
\par\bigskip\noindent
\begin{que}
Let $X$ and $Y$ be compact spaces. Suppose that $C_p(X\times (\omega_1+1))$ is homeomorphic to $C_p(Y\times (\omega_1+1))$. Is it true that  $C_p(X\times \omega_1 )$ is homeomorphic to $C_p(Y\times \omega_1)$?
\end{que}

\par\bigskip\noindent
\begin{que}
Are there spaces $X$ and $Y$ (not necessarily compact) such that   $C_p(X\times (\omega_1+1))$ is homeomorphic to $C_p(Y\times (\omega_1+1))$ but  $C_p(X\times \omega_1 )$ is not homeomorphic to $C_p(Y\times \omega_1)$? Or, such that  $C_p(X\times \omega_1 )$ is  homeomorphic to $C_p(Y\times \omega_1)$ but  $C_p(X\times (\omega_1+1))$ is not homeomorphic to $C_p(Y\times (\omega_1+1))$ ?
\end{que}

\par\bigskip\noindent
We would like to finish with two general questions related to our study that are also particular cases of the general problem that motivated this work.

\par\bigskip\noindent
\begin{que}
Assume that $C_p(X\times \tau)$ and $C_p(Y\times\tau)$ are homeomorphic for some ordinal $\tau>1$. What additional conditions on $X,Y,\tau$ guarantee that  $C_p(X)$ and $C_p(Y)$ homeomorphic? That $X$ and $Y$ are homeomorphic? 
\end{que}

\par\bigskip\noindent
\begin{que}\label{que:q1}
Is there an example of $X$ and $Y$ with homeomorphic $C_p$'s such that $C_p(X\times \tau)$ and $C_p(Y\times \tau)$ are not homeomorphic for some $\tau$?
\end{que}

\par\bigskip\noindent
 In connection with this question, a notable  pair to test is the set of reals and a closed segment. It is due to Gulko and Khmyleva \cite{GH} that $C_p(\mathbb R)$ and $C_p([0,1])$ are homeomorphic. It might be interesting to know  if there is an ordinal $\lambda$ such that $C_p(\mathbb R\times \lambda)$ and $C_p([0,1]\times \lambda)$  are not homeomorphic. For an overview of non-homeomorphic spaces with homeomorphic $C_p$'s , 
which may address  Question \ref{que:q1},
we refer the reader to \cite{vM}.

\par\bigskip

\end{document}